\begin{document}
 \title{Periodic solutions and the avoidance of pull--in instability in non--autonomous micro--electro--mechanical systems\thanks{Received date, and accepted date (The correct dates will be entered by the editor).}}


          \author{Shirali Kadyrov\thanks{Suleyman Demirel University, Kaskelen, Kazakhstan, (shirali.kadyrov@sdu.edu.kz).}
          \and Ardak Kashkynbayev \thanks{Department of Mathematics, Nazarbayev University, Kabanbay-Batyr 53, 010000 Nur-Sultan, Kazakhstan, (ardak.kashkynbayev@nu.edu.kz).}
          \and  Piotr Skrzypacz  \thanks{Department of Mathematics, Nazarbayev University, Kabanbay-Batyr 53, 010000 Nur-Sultan, Kazakhstan, (piotr.skrzypacz@nu.edu.kz).}
          \and Konstantinos Kaloudis \thanks{Department of Mechanical and Aerospace Engineering, Nazarbayev University, Kabanbay Batyr 53, 010000 Nur-Sultan, Kazakhstan, (konst.kaloudis@gmail.com).}
          \and Anastasios Bountis\thanks{Department of Mathematics, University of Patras Patras, 26500 Greece, (anastasios.bountis@nu.edu.kz).}
          }

         \pagestyle{myheadings} \markboth{SHORT FORM FOR RUNNING HEADS}{AUTHORS' NAME} \maketitle

          \begin{abstract}
             We study periodic solutions of a one-degree of freedom micro-electro-mechanical system (MEMS) with a parallel-plate capacitor under $T$--periodic electrostatic forcing. We obtain analytical results concerning the existence of $T-$ periodic solutions of the problem in the case of arbitrary nonlinear restoring force, as well as when the moving plate is attached to a spring fabricated using graphene. We then demonstrate numerically on a $T-$ periodic Poincar{\'e} map of the flow that these solutions are generally locally stable with large ``islands'' of initial conditions around them, within which the pull-in stability is completely avoided. We also demonstrate graphically on the Poincar{\'e} map that stable periodic solutions with higher period $nT, n>1$ also exist, for wide parameter ranges, with large ``islands'' of bounded motion around them, within which all initial conditions avoid the pull--in instability, thus helping us significantly increase the domain of safe operation of these MEMS models. 
          \end{abstract}
\begin{keywords}   MEMS; dynamical systems, pull-in; forced graphene oscillator; periodic solutions; Poincar{\'e} map; basins of attraction.  
\end{keywords}

 \begin{AMS} 
\end{AMS}
          \section{Introduction}\label{S:Introduction}
Among various actuators, electrostatic ones are among the most commonly used micro-electro-mechanical systems (MEMS) and nano-electro-mechanical systems (NEMS), as they have many advantages including fast response time, small DC power consumption, compatible integration and fabrication \cite{PS13}. In the present paper, the proposed electrostatically actuated MEMS are realized as mass-spring models with two parallel capacitor plates where the one attached to the spring is movable, and the other is kept fixed, while a periodically varying voltage is applied, as shown in Figure~\ref{fig_1}. Here, $d$ is the distance between the plates, $k$ is the stiffness constant of the spring, $x \in [0,d]$ is the displacement moving plate of mass $m$, $c$ is the viscous damping coefficient, $\varepsilon_0>0$ is the electric emissivity and finally $A$ is the area of the attracting parallel plates.

\begin{figure}[htb]
	\begin{center}
		\includegraphics[scale=0.5]{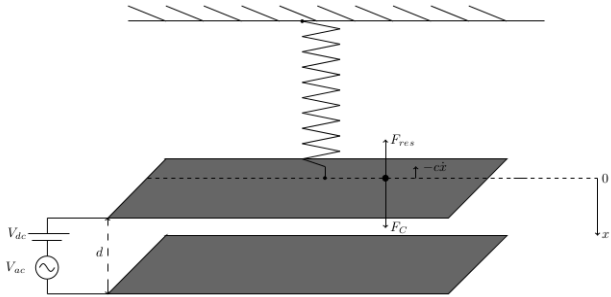}
		\caption{The parallel-plate capacitor.}\label{fig_1}
	\end{center}
\end{figure}

This model was discovered in 1967 by Nathanson et al. \cite{NNWD67} and has been studied since by many researchers. For more recent studies see e.g. \cite{SHEK20, SKNW19, younis2019, GLXMZJZ18, WKK17, Flo17, ZYPM14, IY10, AP07} and  references therein.
When nonlinear stiffness constants of the plates are ignored in the constitutive equation, the oscillations of the system can be modeled by the second-order singular equation 
\begin{equation}\label{sing_osc}
m \ddot{x}(t)+c \dot{x}(t)+h(x)=F_C=\frac{\varepsilon_0A V^2(t)}{2 \bigl(d-x(t)\bigr)^2}\,.
\end{equation}
The above equation of motion includes Coulomb's inverse-square law for the electrostatic force $F_C$,a stiffness function $h(x)$ that reduces to Hooke's law for a linear spring, and a viscous dissipation force linearly proportional to the speed of the moving plate, i.e., $c\dot{x}$. 

We consider here a periodically varying AC-DC voltage of the form
\begin{equation}\label{eq_ACDC}
V(t)=V_{dc}+V_{ac} \cos(\omega t)=\lambda\bigl(1+\delta\cos{(\omega t)}\bigr),
\end{equation} 
with period $T=2\pi/\omega$, where $\lambda=V_{dc}$, $\delta=\frac{V_{ac}}{V_{dc}}$, and assume $|\delta|<1$.

In this case, the system becomes non-autonomous, while the singular term on the right-hand side makes the analysis particularly challenging. Recently, this model was considered in \cite{GT13} to study saddle--node bifurcations and analyze the existence and stability of $T-$ periodic solutions. In \cite{shang2017}, the thresholds of AC voltage for pull-in instability in the initial system and the feedback controlled systems were obtained analytically by the Melnikov method \cite{AH07}.

Perhaps, the most important phenomenon in all these actuator models is their so called pull-in instability effect, whereby, when the applied voltage exceeds a certain critical value the moving plate collapses onto the fixed one. In this case, we say that the pull-in has occurred and the critical value of the voltage is called the pull-in voltage. Clearly, the singular term in the equation and the presence of a dissipation term make it difficult to compute the exact value of the pull-in voltage. This is very important as, in practice, one wants to have bounded oscillations and avoid instability effects when designing micro-resonators. 

One way to achieve this is to require that \eqref{sing_osc} has stable periodic solutions with large regions around them where the motion is confined for $t\rightarrow \infty$ avoiding the pull-in instability.  
For the above  mass-spring system, Zhang et al. \cite{ZYPM14} describe the dynamic pull-in as the collapse of the moving structure caused by the combination of kinetic and potential energies. In general, the dynamic pull-in requires a lower voltage to be triggered compared to the static pull-in threshold, see \cite{Flo17, ZYPM14}. Unfortunately, there are few analytical results for mass-spring models based on nonlinear restoring forces. 

Thus, in Section\ref{per_rig} of the paper we study the existence of periodic solutions for the model with the same period $T$ as the time-dependent voltage using elementary analytical tools, so that we may estimate pull-in voltages for this model for a general nonlinear stiffness function $h(x)$ in (\ref{sing_osc}). Next, in Section \ref{graphene} we formulate the mathematical model for a MEMS  parallel plate attached to a spring made of graphene, obtain analytical results for its $T-$ periodic solutions and analyze the properties of its pull-in instabilities.

In Section \ref{Poincare} we perform numerical computations, using the Poincar{\'e} map of the flow. As a first step, in the absence of dissipation ($c=0$), we demonstrate the stability of our $T-$ periodic solution, located at the origin of Poincar{\'e} map, by graphically depicting the existence of large domains of bounded oscillations around it, for wide ranges of parameter values. We then demonstrate on the Poincar{\'e} map the existence of stable periodic solutions of period $nT$, $n>1$, further away from the $T-$ periodic solution, which also possess large ``islands'' around them, within which the motion remains bounded away from the pull--in instability. Finally, for very small dissipation,($0<c<<1$), the period $T$ solution at the origin becomes a global attractor, while stable $nT-$ periodic solutions still exist and can attract initial conditions in their vicinity. Our conclusions are described in Section \ref{conclusions}.

\section{T-Periodic solutions: Rigorous results}\label{per_rig}

\noindent We begin our study of equation \eqref{sing_osc} by investigating analytically the existence of periodic solutions having the same period $T$ as the periodic forcing term \eqref{eq_ACDC}. To this end, we define 
\begin{equation*} 
V_m=\min\limits_{t\ge 0} |V(t)|
\quad\text{and}\quad 
V_M=\max\limits_{t\ge 0} |V(t)|\,,
\end{equation*} 
and state the following result:
\begin{theorem} \label{thm:main}
	Let $C$ be a non-negative constant, $V(t)$ is a periodic real function, $h(x)$ a continuous real function on $(-\infty, 1]$. Then, the second order nonlinear differential equation
	\begin{equation} \label{eqn:main}
	\ddot x + C\dot x  +h(x) - \frac{V^2(t)}{(1-x)^2}=0,
	\end{equation}
	admits a periodic solution provided the equation
	$$h(x)=\frac{(V_M)^2}{(1-x)^2}$$
	has a root in $[0,1)$. On the other hand, equation \eqref{eqn:main} admits no periodic solutions provided the equation
	$$h(x)=\frac{V_m^2}{(1-x)^2}$$
	has no roots in $(-\infty,1)$, but $h(x)$ has at least one real root on $(-\infty, 1).$
\end{theorem}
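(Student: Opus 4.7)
The two halves of the theorem will be proved by rather different arguments. Non-existence is a one-line integration, whereas existence needs an actual construction of a solution.

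For the non-existence half, my plan is to integrate the equation over one period. If $x(t)$ were any $T$-periodic solution, then necessarily $x(t)<1$ throughout (otherwise the equation is singular), and the terms $\int_0^T \ddot x\,dt$ and $C\int_0^T\dot x\,dt$ both vanish, leaving the averaged identity
\begin{equation*}
\int_0^T h\bigl(x(t)\bigr)\,dt \;=\; \int_0^T \frac{V^2(t)}{(1-x(t))^2}\,dt.
\end{equation*}
The hypotheses on $h$ will be unpacked first. Since $h$ has a real root $x^{*}\in(-\infty,1)$, one has $h(x^{*})=0$; if $V_m=0$ then $x^{*}$ would solve $h(x)=V_m^{2}/(1-x)^{2}$, contradicting the assumption, so $V_m>0$ and hence $h(x^{*})<V_m^{2}/(1-x^{*})^{2}$. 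The continuous function $x\mapsto h(x)-V_m^{2}/(1-x)^{2}$ has no zero on $(-\infty,1)$ and is negative at $x^{*}$, so by the intermediate value theorem it is negative on all of $(-\infty,1)$. Using $V^{2}(t)\ge V_m^{2}$ pointwise then gives $\int_0^T V^{2}/(1-x)^{2}\,dt>\int_0^T h(x)\,dt$, contradicting the averaged identity.

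For the existence half, my plan is the classical method of upper and lower solutions for $T$-periodic problems (de Coster–Habets). The upper solution is free: take $\beta(t)\equiv x_M$ where $x_M\in[0,1)$ is the root supplied by the hypothesis; direct substitution yields
\begin{equation*}
\ddot\beta+C\dot\beta+h(\beta)-\frac{V^{2}(t)}{(1-\beta)^{2}}
\;=\;\frac{V_M^{2}-V^{2}(t)}{(1-x_M)^{2}}\;\ge\;0,
\end{equation*}
since $V_M=\max_t|V(t)|$. For a matching lower solution $\alpha\le\beta$ I would first look for a constant $\alpha\le x_M$ satisfying $h(\alpha)(1-\alpha)^{2}\le V_m^{2}$, since any such $\alpha$ trivially gives $h(\alpha)\le V^{2}(t)/(1-\alpha)^{2}$ for every $t$. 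The natural candidate is a sufficiently large negative value: as $\alpha\to-\infty$, $(1-\alpha)^{2}\to\infty$ and the condition holds provided $h$ is eventually non-positive (or at least not too positive) at $-\infty$, which is mild in the MEMS context. Once $\alpha\le\beta$ are a well-ordered pair of lower/upper solutions for the $T$-periodic problem and the right-hand side is continuous on the strip $[\alpha,\beta]\times\mathbb{R}$, the standard theorem produces a $T$-periodic solution $x(t)\in[\alpha(t),\beta(t)]\subset(-\infty,x_M]\subset(-\infty,1)$, avoiding the singularity automatically.

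The only genuine obstacle is the construction of the lower solution: the hypothesis as written places no sign or growth restriction on $h$ away from the root $x_M$, so in full generality the claim may need an implicit qualitative assumption on $h$ at $-\infty$, or else a non-constant $\alpha(t)$ obtained, for example, as a bounded solution of the autonomous comparison equation $\ddot u+C\dot u+h(u)=V_m^{2}/(1-u)^{2}$. Once the ordered pair $\alpha\le\beta$ is in hand, the rest is a routine application of a known fixed-point/degree argument (Schauder applied to the truncated Green-function reformulation of the $T$-periodic problem), and no further difficulty arises.
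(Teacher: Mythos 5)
Your non-existence argument is correct and, while packaged differently, is essentially equivalent to the paper's: you integrate the equation over a period to get $\int_0^T h(x)\,dt=\int_0^T V^2(t)/(1-x)^2\,dt$, whereas the paper evaluates the equation at a maximum of $x(t)$; both reduce to the same observation that the hypotheses force $h(x)<V_m^2/(1-x)^2$ on all of $(-\infty,1)$, which yields the contradiction.

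The existence half, however, has a genuine gap, and it stems from a reversal of labels. Writing the equation as $\ddot x + C\dot x + h(x)-V^2(t)/(1-x)^2=0$, the constant $x_M$ with $h(x_M)(1-x_M)^2=V_M^2$ gives residual $\bigl(V_M^2-V^2(t)\bigr)/(1-x_M)^2\ge 0$, which in the De Coster--Habets convention (the one the paper invokes) makes $x_M$ a \emph{lower} solution, not an upper one. The companion you must produce is therefore an upper solution lying \emph{above} $x_M$, whereas you search for a constant $\alpha\le x_M$, and moreover you look for it near $-\infty$. This fails twice over: even if such an $\alpha$ existed, the pair $(\alpha,x_M)$ would be in reversed order, so the standard well-ordered lower/upper solution theorem you cite does not apply; and, as you concede, guaranteeing $h(\alpha)(1-\alpha)^2\le V_m^2$ for large negative $\alpha$ requires growth or sign assumptions on $h$ that the theorem does not grant. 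The missing idea is elementary and is the crux of the paper's proof: set $g(x)=h(x)(1-x)^2$ and note that $g(x_M)=V_M^2\ge V_m^2\ge 0=g(1)$, so the Intermediate Value Theorem produces $x_u\in[x_M,1]$ with $g(x_u)=V_m^2$, i.e.\ $h(x_u)\le V^2(t)/(1-x_u)^2$ for all $t$. This constant $x_u$ is the upper solution, it sits above the lower solution $x_M$ by construction, and no information about $h$ at $-\infty$ is needed. With that replacement your appeal to the periodic lower/upper solution theorem goes through as in the paper.
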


\begin{proof}
	Following \cite{DH06}, a periodic function $x(t)$ is said to be \emph{a lower solution} if 
	$$\ddot x + C \dot x + h(x) - \frac{V^2(t)}{(1-x)^2} \ge 0$$ 
	and \emph{an upper solution} if 
	$$\ddot x + C \dot x + h(x) - \frac{V^2(t)}{(1-x)^2} \le 0$$ 
	for all $t\ge 0$.
	We infer from \cite[Chp 1, Theorem~1.1]{DH06} that there exists a periodic solution if there are lower and upper solutions $ x(t), y(t) $ respectively such that $x(t) < y(t)$ for all $t$. Let $x_\ell$ be any root of the equation 
	\[
	h(x)=(V_M)^2/(1-x)^2
	\] 
	in $[0,1]$. Then, clearly $x_\ell<1$ and $x(t)=x_\ell$ satisfies
	$$\ddot x + k \dot x + h(x) - \frac{V^2(t)}{(1-x)^2} =h(x_\ell)-\frac{V^2(t)}{(1-x_\ell)^2} \ge h(x_\ell)-\frac{(V_M)^2}{(1-x_\ell)^2} =0,$$
	so that $x(t)=x_\ell$ is a lower solution. Since 
	$$h(1)(1-1)^2=0 \le V_m^2 \le (V_M)^2=h(x_\ell)(1-x_\ell)^2,$$ 
	it follows from the Intermediate Value Theorem for continuous functions that there exists $x_u \in [x_\ell,1]$ such that  $h(x_u)(1-x_u)^2=V_m^2 \le V^2(t)$. Thus, $x(t)=x_u$ is an upper solution such that $x_\ell \le x_u$, which gives the first part of the proof.
	
	If the system admits a periodic solution $x(t)$ then this function must have a maximum. This means that there exists $t=t_0$ such that $\ddot x(t_0) \le 0$ and $\dot x(t_0)=0$. Then, we conclude that 
	$$0=\ddot x(t_0) + C \dot x(t_0) + h(x(t_0)) - \frac{V^2(t_0)}{(1-x(t_0))^2} \le h(x(t_0)) - \frac{V^2(t_0)}{(1-x(t_0))^2}\,,$$
	and hence
	\begin{equation}\label{eqn:no}
	h(x(t_0))  \ge \frac{V_m^2}{(1-x(t_0))^2}\,.
	\end{equation}
	On the other hand, by assumption, $h(x)=V_m^2/(1-x)^2$ has no roots in $(-\infty,1)$. Since $h$ is continuous it follows that either $h(x)>V_m^2/(1-x)^2$ or $h(x)<V_m^2/(1-x)^2$ on $(-\infty,1)$. The former is not possible as $h(x)$ has a zero in $(-\infty, 1).$ Hence, we must have $h(x)<V_m^2/(1-x)^2$ on $(-\infty,1)$ which contradicts \eqref{eqn:no}. 
\end{proof}

\section{The case of the graphene oscillator}\label{graphene}
\subsection{Mathematical formulation}

\noindent To motivate Theorem~\ref{thm:main}, we now describe our graphene oscillator model in some detail and recall how the nonlinear equation is derived. We also state other related results. As is well--known, graphene is an ideal material for implementation in NEMS and graphene based materials are being used in practice for designing actuators by material scientists due to its low density and high strength \cite{HJY12}.

Both experimental and theoretical studies \cite{LWKH08, LH09} suggest that the graphene material obeys the following constitutive equation
$$\sigma=E \varepsilon - D |\varepsilon|\varepsilon,$$
where $\varepsilon$, $\sigma$, $E$ and $D$ are the axial strain, axial stress, Young's modulus and second-order elastic stiffness constant, respectively. 

In the mass-spring model above, the restoring force is related to the constitutive equation in the presence of viscous damping as follows

\begin{equation*}
F_{res} = -c\dot{x} -EA_c\frac{x}{L}+DA_c\left|\frac{x}{L}\right|\frac{x}{L}\,,
\end{equation*}
where $\dot{x}=dx/dt$ and $A_c, L$ denote the cross-sectional area of the graphene sheet and the length of the graphene sheet, respectively. The moving plate is subject to an electrostatic Coulomb force expressed in terms of a voltage function $V(t)$ as follows:
\begin{equation*}
F_{C} = \frac{\varepsilon_0 AV(t)^2}{2(d-x)^2}\,.
\end{equation*}
Using Newton's second law of motion we thus arrive at the following nonlinear differential equation
\[
m \ddot{x}=F_{res}+F_{C}\,,
\]
where $\ddot{x}=d^2x/dt^2$, hence
\begin{equation}\label{eq_dim}
m\ddot{x}+c\dot{x}+EA_c\frac{x}{L}-DA_c\left|\frac{x}{L}\right|\frac{x}{L}=\frac{\varepsilon_0 AV(t)^2}{2(d-x)^2}.
\end{equation}

Numerical simulations \cite{WKK17} show that the solutions of \eqref{eq_dim} differ significantly depending on whether a nonlinear stiffness constant $D$ is included or not. Recently, the pull-in instability was studied for an undamped ($c=0$) mass-spring system with nonlinear stiffness when the voltage is kept constant. Indeed, in \cite{SKNW19}, the authors derived necessary and sufficient conditions for the occurrence of dynamic pull-in and established an operational diagram for a device made of graphene.

It is easy to see that \eqref{eqn:main} is equivalent to our graphene model \eqref{eq_dim}. Indeed, if we substitute in the equation \eqref{eq_dim}
\begin{equation*}
\frac{x}{d}\mapsto x\,,\quad t\sqrt{\frac{EA_c}{mL}} \mapsto t\,,\quad c\sqrt{\frac{L}{mEA_c}}\mapsto c\,,\quad V(t)\sqrt{\frac{\varepsilon_0 AL}{2EA_cd^3}}\mapsto  V(t)\,,
\end{equation*}
and define the parameter
\begin{equation*}
\alpha=\frac{Dd}{EL}
\end{equation*}
we obtain the following dimensionless form for the graphene model
\begin{equation}\label{eqn:less}
\ddot{x}+c \dot{x} +x-\alpha |x|x = \frac{V^2(t)}{(1-x)^2}\,,
\end{equation}
where $c\ge 0$, $\alpha \ge 0$. Thus, with $V(t)=\lambda\bigl(1+\delta\cos{(\omega t)}\bigr)$ and $h(x)=x(1-\alpha |x|)$ in \eqref{eqn:main}, we recover the graphene model \eqref{eqn:less}.\\
\subsection{Analysis of the pull-in instability}
In this Subsection we carry out a rigorous analysis of the existence of $T-$ periodic solutions and estimate the associated pull-in voltage for our system.\\ 
\begin{theorem} \label{thm:Aalpha}
	Let $\mu_\alpha=\sqrt{4\alpha^2-4\alpha+9}$ and 
	\begin{equation}\label{A_alpha}
	A_{\alpha}= \frac{(2\alpha +3 -\mu_\alpha)(5-2\alpha +\mu_\alpha)(6\alpha -3 +\mu_\alpha)^2}{4096\alpha^3}\,.
	\end{equation}
	If $ V_M^2\le  A_{\alpha}$, the system \eqref{eqn:less} admits a periodic solution with the same period $T$ as $V(t)$.\\ 
\end{theorem}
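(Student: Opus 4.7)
The plan is to apply Theorem~\ref{thm:main} to the dimensionless graphene equation~\eqref{eqn:less}, for which $h(x)=x-\alpha|x|x$. On the relevant interval $[0,1)$ this simplifies to $h(x)=x(1-\alpha x)$, so Theorem~\ref{thm:main} guarantees a $T$-periodic solution as soon as the continuous function
\begin{equation*}
f(x):=h(x)(1-x)^2=x(1-\alpha x)(1-x)^2
\end{equation*}
attains the value $V_M^2$ somewhere in $[0,1)$. Because $f(0)=0$, the intermediate value theorem implies that $f$ takes every value between $0$ and its maximum on $[0,1)$; hence it suffices to verify that this maximum equals the constant $A_\alpha$ of~\eqref{A_alpha}.

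Finding the maximum is a routine calculus exercise. Differentiating yields
\begin{equation*}
f'(x)=(1-x)\bigl(4\alpha x^2-(2\alpha+3)x+1\bigr),
\end{equation*}
so the interior critical points are the roots of the quadratic factor. Its discriminant is $(2\alpha+3)^2-16\alpha=4\alpha^2-4\alpha+9=\mu_\alpha^2$, giving
\begin{equation*}
x_\pm=\frac{2\alpha+3\pm\mu_\alpha}{8\alpha},
\end{equation*}
both positive because $\mu_\alpha^2-(2\alpha+3)^2=-16\alpha<0$. Since $f(0)=f(1)=0$ while $f>0$ just to the right of $0$, the global maximum on $[0,1)$ is attained at the smaller critical point $x^*:=x_-$, which one checks lies in $(0,1)$ for every $\alpha>0$ (the nontrivial case $0<\alpha<1/2$ uses the identity $\mu_\alpha^2-(3-6\alpha)^2=32\alpha(1-\alpha)>0$).

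It remains to evaluate $f(x^*)$, which is the genuinely laborious (though elementary) step of the proof. From $x^*=(2\alpha+3-\mu_\alpha)/(8\alpha)$ one obtains
\begin{equation*}
1-x^*=\frac{6\alpha-3+\mu_\alpha}{8\alpha},\qquad 1-\alpha x^*=\frac{5-2\alpha+\mu_\alpha}{8},
\end{equation*}
and multiplying the three factors of $f(x^*)=x^*(1-\alpha x^*)(1-x^*)^2$ reproduces exactly the expression for $A_\alpha$ in~\eqref{A_alpha}. The main obstacle is therefore not conceptual but the algebraic bookkeeping in this last substitution. Once this is done, whenever $V_M^2\le A_\alpha$ the intermediate value theorem supplies some $x_0\in[0,x^*]$ with $f(x_0)=V_M^2$, so Theorem~\ref{thm:main} applies and \eqref{eqn:less} admits a $T$-periodic solution; the degenerate limit $\alpha\to 0^+$ recovers the classical maximum $4/27$ of $x(1-x)^2$ attained at $x=1/3$.
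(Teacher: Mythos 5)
Your proposal is correct and follows essentially the same route as the paper: reduce to Theorem~\ref{thm:main}, maximize $f(x)=x(1-\alpha x)(1-x)^2$ via the factorization $f'(x)=(1-x)\bigl(4\alpha x^2-(2\alpha+3)x+1\bigr)$, identify the smaller root $x^*$ as the maximizer, verify $f(x^*)=A_\alpha$, and invoke the intermediate value theorem. The only cosmetic difference is that the paper restricts to $[0,\beta]$ with $\beta=\min(1,1/\alpha)$ to keep $f\ge 0$, whereas you argue directly on $[0,1)$; both handle the location of the maximum equivalently.
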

\begin{proof}
	Let us define 
	\[
	f(x)=(x-\alpha x|x|)(1-x)^2
	\] and 
	\[
	\beta=\min(1,1/\alpha)\,.
	\] 
	Then, $f(x)=(x-\alpha x^2)(1-x)^2 $ on $ [0,\beta]$, and one can easily find that 
	\[
	x_{1,2}=(3+2\alpha \pm \sqrt{9 - 4\alpha + 4\alpha^2})/(8\alpha)\quad\text{and}\quad x_3=1
	\] 
	represent the local extrema of $f(x)$, solving $f'(x)=0$. All three roots of $f'(x)=0$ are real as $9-4\alpha+4\alpha^2=8+(1-2\alpha)^2 >0$ and are positive. Since $ f(x)\geq 0 $ for $ x\in [0,\beta]$ and  $ f(0)=f(\beta)=0, $ the local maximum of the function $ f(x) $  on $ [0,\beta]$ occurs at the smallest non--negative critical point, that is, at $x_{1}=(3+2\alpha - \sqrt{9 - 4\alpha + 4\alpha^2})/(8\alpha).$ Furthermore, $ f(0)=0\le (V_M)^2\le A_{\alpha}=f(x_1) $ together with the Intermediate Value Theorem imply that $ f(x)= (V_M)^2$ has a root in $ [0,\beta] \subseteq [0,1].$ This shows that $ x-\alpha x|x|= (V_M)^2/(1-x)^2$ has a solution in $ [0,1]. $ One can then conclude based on Theorem \ref{thm:main} that \eqref{eqn:less} has a periodic solution with the same period as $V(t).$ This concludes the proof of the theorem. \end{proof}
\begin{remark}
	Notice that in the limiting case, i.e., $\alpha\rightarrow 0^+$, we have $A_\alpha = \frac{4}{27}$, which corresponds to the static pull-in voltage for the model with linear restoring force.\\[2ex]
\end{remark}
In the case of negligible damping, i.e., $c=0$, and a time-independent applied voltage, i.e., $\delta=0$ in Eq. \eqref{eq_ACDC}, it has been shown that the model equation \eqref{eqn:less} subject to zero initial conditions has periodic solutions if 
\[
\lambda^2 <\frac{(2\alpha+3-\tilde\mu_\alpha)(-4\alpha^2+24\alpha-9+2\alpha\tilde\mu_\alpha+3\tilde\mu_\alpha)}{648\alpha^2}
\]
where $\tilde\mu_\alpha=\sqrt{4\alpha^2-6\alpha+9}$ \cite{SKNW19}. Notice that Theorem~\ref{thm:Aalpha} does not specify the initial conditions for which the periodic solution exists. However, its amplitude can be estimated using the following\\
\begin{lemma}\label{lem:upp}
	If $x$ is a periodic solution of  \eqref{eqn:less} then either
	\begin{equation}\label{ine:1}
	\max_{t\ge 0} \left\lbrace x(t)\right\rbrace  \le -1/\alpha	
	\end{equation}
	or
	\begin{equation} \label{ine:2}
	0\le \max_{t\ge 0} \left\lbrace x(t)\right\rbrace \le  \min \left\lbrace 1/\alpha, 1-2V_m\sqrt{\alpha} \right\rbrace.
	\end{equation}
\end{lemma}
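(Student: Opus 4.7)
The plan is to exploit the fact that a continuous periodic solution attains its maximum. Let $M=\max_{t\ge 0}x(t)$ and pick $t_0$ with $x(t_0)=M$. At such a point $\dot x(t_0)=0$ and $\ddot x(t_0)\le 0$, so substituting into \eqref{eqn:less} gives
$$
M-\alpha|M|M \;=\; \frac{V^2(t_0)}{(1-M)^2}-\ddot x(t_0) \;\ge\; \frac{V_m^2}{(1-M)^2}\;\ge\;0.
$$
This single inequality is the engine of the entire argument; the rest is case analysis and an AM--GM estimate.

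Next I would split on the sign of $M$. If $M<0$, the inequality reads $M(1+\alpha M)\ge 0$, and since $M<0$ this forces $1+\alpha M\le 0$, i.e.\ $M\le -1/\alpha$, giving the first alternative \eqref{ine:1}. If $M\ge 0$, it reads $M(1-\alpha M)\ge 0$, immediately yielding $M\le 1/\alpha$ (which is half of the bound in \eqref{ine:2}). To obtain the second, sharper bound $M\le 1-2V_m\sqrt{\alpha}$ I would apply AM--GM to the non-negative numbers $\alpha M$ and $1-\alpha M$, which sum to $1$: their product is at most $1/4$, so $M(1-\alpha M)\le 1/(4\alpha)$. Chaining this with the main inequality gives $V_m^2/(1-M)^2\le 1/(4\alpha)$, hence $(1-M)^2\ge 4\alpha V_m^2$.

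Taking positive square roots then yields $M\le 1-2V_m\sqrt{\alpha}$, and combining the two upper bounds on $M$ in the case $M\ge 0$ produces $M\le\min\{1/\alpha,\,1-2V_m\sqrt{\alpha}\}$, which is precisely \eqref{ine:2}.

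The only delicate step is the passage from $(1-M)^2\ge 4\alpha V_m^2$ to $1-M\ge 2V_m\sqrt{\alpha}$, which requires $M<1$. For $\alpha\ge 1$ this is automatic from $M\le 1/\alpha$, but for $\alpha<1$ one has $1/\alpha>1$ and a separate argument is needed. I expect this to be the main (minor) obstacle, and I would dispose of it by noting that the right-hand side of \eqref{eqn:less} is singular at $x=1$, so a bounded periodic orbit cannot cross the singular line and must remain on one side of it; in the physically relevant regime where the displacement starts in $[0,1)$, this forces $M<1$, and the argument goes through.
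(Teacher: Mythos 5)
Your proof is correct and follows essentially the same route as the paper: evaluate \eqref{eqn:less} at the maximum point where $\dot x=0$ and $\ddot x\le 0$ to get $x(t_0)\bigl(1-\alpha|x(t_0)|\bigr)\ge V_m^2/(1-x(t_0))^2\ge 0$, split into cases (the paper splits on $|x(t_0)|$ versus $1/\alpha$ rather than on the sign of $M$, but this is cosmetic), and use the same AM--GM bound $M(1-\alpha M)\le 1/(4\alpha)$ to extract $M\le 1-2V_m\sqrt{\alpha}$. The delicate point you flag --- needing $M<1$ to take the square root with the right sign --- is present in the paper too and is disposed of there exactly as you propose, by restricting to physical solutions with $x<1$.
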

\begin{proof}	
	Following the same argument as in the second part of Theorem \ref{thm:main}, we say that $ x(t) $ has a maximum at $ t_1\in[0,T]. $ Thus, it is clear that 
	$\ddot x(t_1) \le 0$ and $\dot x(t_1)=0.$ Furthermore, from \eqref{eqn:less} it follows that
	\begin{equation}\label{e2}
	x(t_1)\bigl(1-\alpha |x(t_1)|\bigr)\bigl(1-x(t_1)\bigr)^2 \ge V^2(t_1) \ge V_m^2\ge0. 
	\end{equation}
	Now, we consider three cases:\\[2ex]
	\textit{Case 1:} $ |x(t_1)|> 1/\alpha$.\\
	Then, inequality \eqref{e2} implies that  $ x(t_1)\le 0. $ Thus, $x(t)\le x(t_1)< -1/\alpha$.\\[2ex]
	\textit{Case 2:} $ |x(t_1)| < 1/\alpha$.\\
	Then, inequality \eqref{e2} implies that  $ x(t_1) \geq 0 $ and $ x(t) \le 1/\alpha .$ On the other hand, by means of the inequality 
	$$ 4\alpha x(t_1)\bigl(1-\alpha x(t_1)\bigr) \le \bigl(\alpha x(t_1)+1-\alpha x(t_1)\bigr)^2=1$$
	we get $ \displaystyle x(t_1)\bigl(1-\alpha x(t_1)\bigr) \le \frac{1}{4\alpha}$. Next, from \eqref{e2} we have that 
	\begin{align*}
	 V_m^2 &\le x(t_1)\,\,\bigl(1-\alpha |x(t_1)|\bigr)\bigl(1-x(t_1)\bigr)^2 \\ &= x(t_1)\bigl(1-\alpha x(t_1)\bigr)\bigl(1-x(t_1)\bigr)^2 
	 \le \frac{1}{4\alpha}\bigl(1-x(t_1)\bigr)^2.
	\end{align*}
	So, $2V_m\sqrt{\alpha} \le 1-x(t_1) $, which implies that 
	$$ x(t)\le x(t_1)\le 1-2V_m\sqrt{\alpha} $$ and $ 2V_m\sqrt{\alpha} \le 1$.\\[2ex]
	\textit{Case 3:} $ |x(t_1)| =1/\alpha$.\\ 
	If $x(t_1)=-1/\alpha$ then \eqref{ine:1} is true since $x(t) \le x(t_1)$ for all $t$. If $x(t_1)=1/\alpha$, then \eqref{e2} implies $V_m=0$ in which case \eqref{ine:2} holds true since all physical solutions are less than $1$.
\end{proof}

We note that in general, the case \eqref{ine:1} of Lemma~\ref{lem:upp} cannot be avoided. Indeed, let us take $V(t)=V$ constant and consider the function $f(x)=x-\alpha x |x|-V^2/(1-x)^2$. Note that over the interval $(-\infty,-1/\alpha]$ the function is continuous and hence by the Intermediate Value Theorem it has a root since $f(-1/\alpha)<0$ and $\lim_{x \to -\infty } f(x) =\infty$ as far as $\alpha>0$. If $x_0$ is one such root, clearly $x(t)=x_0$ is a constant (hence periodic) solution to \eqref{eqn:less} and \eqref{ine:1} holds. However, it is an interesting problem to investigate if \eqref{ine:1} can be avoided when $V(t)$ is not constant but oscillates periodically in time.\\

\begin{lemma}\label{lem:low}
	If $x(t)$ is a periodic solution of  \eqref{eqn:less} then either
	\begin{equation}\label{ine:3}
	\min_{t\ge 0} \left\lbrace x(t)\right\rbrace  \ge -\frac{\sqrt{T}}{2c} \left( \frac{\alpha V_M}{1+\alpha}\right)^{2}+ x(t_0)
	\end{equation}
	or
	\begin{equation} \label{ine:4}
	\min_{t\ge 0} \left\lbrace x(t)\right\rbrace \ge  \max \left\lbrace -\frac{\sqrt{T}}{2c}  \left( \frac{\alpha V_M}{\alpha - 1}\right)^{2}+ x(t_0), -\frac{\sqrt{T}}{8\alpha c}  \left( \frac{ V_M}{V_m}\right)^{2}+ x(t_0)\right\rbrace,  
	\end{equation}
	where $ \displaystyle x(t_0) \ge -\frac{1}{\alpha} $ or $ \displaystyle x(t_0) \ge \frac{-1-\sqrt{1+4\alpha V_M^2}}{2\alpha}$.\\
\end{lemma}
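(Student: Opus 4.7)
The plan is to adapt the first-order-at-max argument of Lemma~\ref{lem:upp} to a first-order-at-min analysis, and then to transport information between the two critical points using the energy dissipated over one period. Let $t_2$ be a time at which $x$ attains its minimum and $t_0$ a time at which it attains its maximum inside a single period. At $t_0$, from $\dot x(t_0)=0$, $\ddot x(t_0)\le 0$, and \eqref{eqn:less}, I obtain $h(x(t_0))(1-x(t_0))^2 \le V_M^2$. Since $h(x)\ge 0$ for $x\le -1/\alpha$ and $(1-x)^2\ge 1$ there, this reduces to $\alpha x(t_0)^2+x(t_0)\le V_M^2$, whose admissible interval yields $x(t_0)\ge (-1-\sqrt{1+4\alpha V_M^2})/(2\alpha)$; in the complementary case of Lemma~\ref{lem:upp} one has $x(t_0)\ge -1/\alpha$ automatically. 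These are exactly the two hypotheses on $x(t_0)$ stated in the lemma. Since $x(t)\le x(t_0)$ for every $t$, each case furnishes a uniform lower bound $1-x(t)\ge M$ with $M=(1+\alpha)/\alpha$ in the former case, and with $M=(\alpha-1)/\alpha$ (when $\alpha>1$) or $M=2V_m\sqrt\alpha$ in the latter.

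Next, I would multiply \eqref{eqn:less} by $\dot x$ and integrate over $[0,T]$. By $T$-periodicity, the inertial contribution $\int_0^T \ddot x\,\dot x\,dt$ and the restoring-force contribution $\int_0^T (x-\alpha|x|x)\dot x\,dt$ are integrals of total derivatives and both vanish, leaving the energy-balance identity
\[
c\,\|\dot x\|_{L^2([0,T])}^2 \;=\; \int_0^T \frac{V^2(t)\,\dot x(t)}{(1-x(t))^2}\,dt.
\]
Bounding the integrand by $(V_M^2/M^2)\,|\dot x|$ and applying Cauchy--Schwarz gives $\|\dot x\|_{L^2}\le V_M^2\sqrt T/(cM^2)$. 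Finally, writing $x(t_0)-x(t_2)=\int_{t_2}^{t_0}\dot x\,ds$ and applying Cauchy--Schwarz on the shorter of the two arcs joining $t_2$ and $t_0$ (whose length is at most $T/2$ by $T$-periodicity) yields a bound of the form $x(t_0)-x(t_2)\le \mathrm{const}\cdot V_M^2/(cM^2)\cdot\sqrt{T}\cdot\sqrt{T/2}$. Substituting the three values of $M$ produces, after rearrangement, the three terms appearing in \eqref{ine:3}--\eqref{ine:4}.

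The principal obstacle, in my estimation, is extracting the precise prefactor $\sqrt T/(2c)$ (rather than a factor of order $T/c$) stated in the lemma: two consecutive applications of Cauchy--Schwarz naturally produce a linear dependence on $T$, so one of them has to be replaced by a sharper tool --- plausibly a Poincar\'e--Wirtinger-type inequality on the half-period, or a direct total-variation argument exploiting $\int_0^T|\dot x|\,dt = 2(\max x-\min x)$ for a simply oscillating periodic $x$ --- to recover an extra $\sqrt T$ in the denominator. A secondary bookkeeping task is pairing the three $M$-dependent bounds with the two case hypotheses on $x(t_0)$ so that the conclusion is presented as the dichotomy \eqref{ine:3} versus the maximum of two terms in \eqref{ine:4}, and verifying that the subcase $\alpha\le 1$ (where the $M=(\alpha-1)/\alpha$ branch degenerates) is correctly subsumed by the $V_m$-branch.
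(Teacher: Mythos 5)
Your overall architecture (a critical-point argument to control $x(t_0)$, the energy balance $c\int_0^T(\dot x)^2\,ds=\int_0^T V^2\dot x\,(1-x)^{-2}\,ds$, pointwise bounds on $(1-x)^{-2}$ imported from Lemma~\ref{lem:upp}, then Cauchy--Schwarz to pass from $\|\dot x\|_{L^2}$ to $x(t)-x(t_0)$) matches the paper's proof closely, and your pairing of the three values of $M$ with the two cases of Lemma~\ref{lem:upp} is exactly right. However, your first step contains a genuine sign error. At a maximum $t_0$ one has $\ddot x(t_0)\le 0$ and $\dot x(t_0)=0$, so \eqref{eqn:less} gives $h(x(t_0))=\frac{V^2(t_0)}{(1-x(t_0))^2}-\ddot x(t_0)\ge \frac{V_m^2}{(1-x(t_0))^2}$, i.e.\ $h(x(t_0))(1-x(t_0))^2\ge V_m^2$ --- a \emph{lower} bound, which is precisely what Theorem~\ref{thm:main} and Lemma~\ref{lem:upp} exploit. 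The upper bound $h(x)(1-x)^2\le V_M^2$ that you need holds at a \emph{minimum} (where $\ddot x\ge 0$), not at the maximum, so as written your derivation of $x(t_0)\ge(-1-\sqrt{1+4\alpha V_M^2})/(2\alpha)$ does not go through. The paper obtains this bound by a different device: it integrates \eqref{eqn:less} over a full period (so the $\ddot x$ and $c\dot x$ terms drop out), uses $(1-x)^2\ge 1$ on the region $x<-1/\alpha<0$ to get $\int_0^T x(1+\alpha x)\,ds\le TV_M^2$, and then invokes the mean value theorem for integrals to produce a time $t_0$ with $x(t_0)\bigl(1+\alpha x(t_0)\bigr)\le V_M^2$. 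Either that averaging argument, or running your second-derivative test at the minimum instead, repairs the step.

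On the prefactor: your suspicion is well founded, and your proposed fix is essentially the paper's. The factor $1/2$ in $\sqrt T/(2c)$ comes from $x(t)-x(t_0)\ge -\int_0^T[\dot x]^-\,ds=-\tfrac12\int_0^T|\dot x|\,ds\ge -\tfrac{\sqrt T}{2}\|\dot x\|_{L^2}$, using that the positive and negative parts of $\dot x$ integrate to the same value over a period --- the total-variation idea you mention (your ``shorter arc of length $T/2$'' variant only yields $\sqrt{T/2}$). Note, however, that a careful Cauchy--Schwarz applied to the energy identity gives $\|\dot x\|_{L^2}\le \frac{\sqrt T}{c}V_M^2\sup (1-x)^{-2}$, with a factor $\sqrt T$ that the paper's intermediate estimates \eqref{e5}--\eqref{e6} do not display; the combined argument therefore produces a prefactor of order $T/(2c)$ rather than $\sqrt T/(2c)$, exactly as you predicted. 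This is a discrepancy in the source that you were right to flag, not an obstacle you failed to overcome.
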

\begin{proof}
	We assume that there exists $ t_0 $ such that $ \displaystyle x(t_0) \ge -\frac{1}{\alpha}. $ Otherwise, integrating \eqref{eqn:less} over a period, one can see that 
	\begin{align*}
	TV_M^2 &\ge \int_{0}^{T}\frac{V^2(s)}{\bigl(1-x(s)\bigr)^2}ds = \int_{0}^{T} x(s)\bigl(1-\alpha |x(s)|\bigr)\,ds\\ &=\int_{0}^{T} x(s)\bigl(1+\alpha x(s)\bigr)\,ds.
	\end{align*} 	
	The last inequality implies that there is a $ t_0 $ such that
	\begin{equation}\label{e3}
	x(t_0)\bigl(1+\alpha x(t_0)\bigr) \le V_M^2. 
	\end{equation} 
	Solving \eqref{e3} yields $ \displaystyle x(t_0) \ge \frac{-1-\sqrt{1+4\alpha V_M^2}}{2\alpha}. $
	
	On the other hand, multiplying \eqref{eqn:less} by $\dot{x}$ and integrating over a period, we arrive at
	\begin{equation}\label{e4}
	c \int_0^T\bigl(\dot{x}(s)\bigr)^2\,ds  = \int_{0}^{T}\frac{V^2(s)\,\dot{x}(s)}{\bigl(1-x(s)\bigr)^2}\,ds.
	\end{equation} 	
	The last relation together with Cauchy$-$Schwartz inequality implies that
	\begin{equation*} 
	c \int_0^T\bigl(\dot{x}(s)\bigr)^2\,ds   \le V_M^2 \left(\int_0^T\bigl(\dot{x}(s)\bigr)^2\,ds\right)^{1/2}\, \left( \int_{0}^{T}\frac{1}{\bigl(1-x(s)\bigr)^4}\,ds\right)^{1/2}. 
	\end{equation*}
	Thus, by Lemma \ref{lem:upp} one can easily show that either 
	\begin{equation}\label{e5}  
	\left(\int_0^T\bigl(\dot{x}(s)\bigr)^2\,ds\right)^{1/2}  \le \frac{1}{c}  \left( \frac{\alpha V_M}{1+\alpha}\right)^{2} 
	\end{equation}
	or 
	\begin{equation}\label{e6}
	\left(\int_0^T\bigl(\dot{x}(s)\bigr)^2\,ds\right)^{1/2}  \le \min \left\lbrace \frac{1}{c}  \left( \frac{\alpha V_M}{\alpha - 1}\right)^{2}, \frac{1}{4\alpha c}  \left( \frac{ V_M}{V_m}\right)^{2} \right\rbrace\,. 
	\end{equation}
	Employing the Cauchy--Schwartz inequality one more time, yields
	\begin{equation}\label{e7}
	\begin{split}
	x(t)-x(t_0) &= \int_{t_0}^{t}\dot{x}(s)ds \ge -\int_{0}^{T}\left[ \dot{x}(s)\right]^{-} ds= -\frac{1}{2} \int_0^T|\dot{x}(s)|\,ds \\
	&\ge -\frac{\sqrt{T}}{2}
	\left(\int_0^T\bigl(\dot{x}(s)\bigr)^2\,ds\right)^{1/2}\,.
	\end{split}
	\end{equation} 	
	Now, \eqref{e5}$-$\eqref{e7} imply that either
	\[
	\displaystyle x(t) \ge -\frac{\sqrt{T}}{2c}  \left( \frac{\alpha V_M}{1+\alpha}\right)^{2}+ x(t_0)
	\] 
	or
	\[
	\displaystyle x(t) \ge \max \left\lbrace -\frac{\sqrt{T}}{2c}  \left( \frac{\alpha V_M}{\alpha - 1}\right)^{2}+ x(t_0), -\frac{\sqrt{T}}{8\alpha c}  \left( \frac{ V_M}{V_m}\right)^{2}+ x(t_0)\right\rbrace
	\] 
	which proves the assertion. 
\end{proof}
Let us now consider Eq. \eqref{eqn:less} subject to zero initial conditions. In the next lemma we determine a lower bound for the period of the positive oscillations.\\
\begin{lemma}\label{lemma_T}
	Let $x\ge 0$ be a periodic solution of the initial value problem for Eq. \eqref{eqn:less} with zero initial conditions $x(0)=x'(0)=0$. Then, it holds true that the period $T$ of the solution $x(t)$ satisfies
	\begin{equation}\label{T_pi}
	T\ge \pi\,.
	\end{equation}
\end{lemma}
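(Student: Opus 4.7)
The plan is to derive an energy-type integral identity by multiplying \eqref{eqn:less} by $x(t)$, and then combine it with the Wirtinger (Poincar\'e) inequality for functions vanishing at the endpoints of $[0,T]$.

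First I would exploit the combination of the initial conditions and the periodicity. Since $x$ is $T$-periodic with $x(0) = \dot x(0) = 0$, we also have $x(T) = 0$ and $\dot x(T) = 0$. The hypothesis $x \ge 0$ reduces the nonlinearity via $\alpha |x|x = \alpha x^2$, so \eqref{eqn:less} becomes
\[
\ddot x + c\dot x + x - \alpha x^2 \;=\; \frac{V^2(t)}{(1-x)^2}.
\]

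Next I would multiply this equation by $x(t)$ and integrate over $[0,T]$. Integration by parts on $\int_0^T x\ddot x\,dt$ uses $x(0)=x(T)=0$ to annihilate the boundary term $[x\dot x]_0^T$, leaving $-\int_0^T \dot x^2\,dt$. The damping contribution is $c\int_0^T x\dot x\,dt = \tfrac{c}{2}[x^2]_0^T = 0$. Rearranging yields the identity
\[
\int_0^T x^2\,dt \;=\; \int_0^T \dot x^2\,dt \;+\; \alpha\int_0^T x^3\,dt \;+\; \int_0^T \frac{V^2(t)\,x(t)}{(1-x(t))^2}\,dt.
\]
Because $\alpha\ge 0$, $x\ge 0$, and $0\le x<1$ on the physical range, all three terms on the right are nonnegative, so in particular $\int_0^T x^2\,dt \ge \int_0^T \dot x^2\,dt$.

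Finally I would invoke the Wirtinger inequality: every $f\in H^1(0,T)$ with $f(0)=f(T)=0$ satisfies $\int_0^T f^2\,dt \le (T/\pi)^2\int_0^T \dot f^2\,dt$. Applying this with $f=x$ gives $(T/\pi)^2\int_0^T \dot x^2\,dt \ge \int_0^T x^2\,dt \ge \int_0^T \dot x^2\,dt$. Provided $\dot x\not\equiv 0$, dividing by $\int_0^T \dot x^2\,dt>0$ yields $(T/\pi)^2\ge 1$, i.e.\ $T\ge \pi$. The degenerate alternative $\dot x\equiv 0$ forces $x\equiv 0$, which is compatible with the equation only if $V\equiv 0$; in this trivial subcase the periodic solution has arbitrary period and the conclusion holds vacuously.

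The only nontrivial point is verifying that all boundary contributions in the integration by parts truly disappear; this is guaranteed precisely because the zero initial data \emph{together with} $T$-periodicity force both $x$ and $\dot x$ to vanish at the endpoints. Once that is secured, the result follows from the clean pairing of the $x\cdot$(equation) identity with the sharp Poincar\'e constant $(T/\pi)^2$ on $H^1_0(0,T)$, with the signs of the nonlinear and forcing terms cooperating throughout.
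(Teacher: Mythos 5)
Your proof is correct and follows essentially the same route as the paper's: multiply \eqref{eqn:less} by $x(t)$, integrate over a period so the boundary terms vanish, observe that for $x\ge 0$ the nonlinear and forcing contributions have the right sign, and conclude via Wirtinger's inequality with the sharp constant $(T/\pi)^2$. Your explicit treatment of the degenerate case $\dot x\equiv 0$ and of the vanishing boundary terms is a small additional care the paper leaves implicit, but the argument is the same.
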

\begin{proof}
	Multiplying Eq. \eqref{eqn:less} by $x(t)$ and integrating over the period $T$, yields
	\begin{align}\label{eq_x_dot_mult}
	\int_0^T \bigl(\dot{x}(t)\bigr)^2\,dt &= \int_0^T x^2(t)\,dt- \alpha\int_0^T x^2(t)|x(t)|\,dt \nonumber\\ &-\int_0^T \frac{V^2(t)\, x(t)}{\bigl(1-x(t)\bigr)^2}\,dt\,.
	\end{align}
	By virtue of Wirtinger's inequality
	\[
	\pi^2\int_0^b f^2(t)\,dt\le b^2\int_0^b \bigl(f'(t)\bigr)^2\,dt
	\]
	which holds for all continuously differentiable functions $f(t)$ on $[0,b]$ with $f(0)=f(b)=0$, it follows from \eqref{eq_x_dot_mult} that
	\begin{equation}\label{T_pi_proof}
	\left(\frac{\pi^2}{T^2}-1\right)\int_0^T x^2(t)\,dt\le -\alpha\int_0^T |x(t)|^3\,dt-\int_0^T \frac{V^2(t)\, x(t)}{\bigl(1-x(t)\bigr)^2}\,dt\,.
	\end{equation}
	If $x\ge 0$, then necessarily
	\begin{equation*}
	\frac{\pi^2}{T^2}-1\le 0
	\end{equation*}
	since the right hand side of \eqref{T_pi_proof} is negative. This proves the statement of the lemma.
\end{proof}
Notice that Lemma~\ref{lemma_T} states that the zero initial value problem for Eq. \eqref{eqn:less} 
does not posses positive periodic solutions of period less than $\pi$.\\ 

In the next lemma, we prove again a theorem from \cite{YZZ2012} and show that the pull-in occurs if the applied voltage is sufficiently large.\\
\begin{lemma}
	Consider physically meaningful solutions $x(t)$ of Eq.~\eqref{eqn:less} such that $x(t)\in [0,1]$. If the voltage parameter $\lambda>0$ is sufficiently large, then the pull-in occurs at a finite time $t_p>0$ depending on $\lambda$ such that
	\[
	\lim\limits_{t\to t_p^-} x(t) = 1\quad\text{and}\quad \dot{x}(t)>0\quad\text{for all}\quad t\in (0,t_p)\,.
	\]
\end{lemma}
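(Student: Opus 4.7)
The plan is to exploit that for sufficiently large $\lambda$ the Coulomb forcing dominates the bounded restoring term uniformly on $[0,1]$, so that $\ddot x+c\dot x$ admits a strict positive lower bound throughout the motion. Starting from the (implicit) rest initial conditions $x(0)=\dot x(0)=0$ as in Lemma~\ref{lemma_T}, this uniform forcing will keep $\dot x$ strictly positive and drive $x$ to $1$ in finite time.

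First I would establish the uniform lower bound. For $x\in[0,1]$ and $\alpha\ge 0$ set $M_\alpha:=\max_{s\in[0,1]}|s(1-\alpha s)|$. Since $(1-x)^{-2}\ge 1$ on $[0,1]$, and $V(t)\ge V_m=\lambda(1-|\delta|)>0$ by virtue of $|\delta|<1$, equation~\eqref{eqn:less} yields
\[
\ddot x(t)+c\dot x(t)\;\ge\;\frac{V_m^2}{(1-x(t))^2}-M_\alpha\;\ge\; V_m^2-M_\alpha.
\]
Choosing $\lambda$ so that $\lambda(1-|\delta|)>\sqrt{M_\alpha}$ makes $K:=V_m^2-M_\alpha>0$, which I would adopt as the quantitative meaning of ``sufficiently large''.

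Next I would show $\dot x(t)>0$ on $(0,t_p)$, where $t_p\in(0,\infty]$ denotes the supremum of times at which $x$ stays in $[0,1)$. Evaluating \eqref{eqn:less} at $t=0$ gives $\ddot x(0)=V^2(0)=\lambda^2(1+\delta)^2>0$, so $\dot x$ enters the positive regime for small $t>0$. If there were a first $t_1\in(0,t_p)$ with $\dot x(t_1)=0$, minimality of $t_1$ would force $\ddot x(t_1)\le 0$, contradicting the lower bound $\ddot x(t_1)\ge K>0$. Hence $x$ is strictly monotone increasing on $[0,t_p)$.

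Finally, to rule out $t_p=\infty$, I would multiply $\ddot x+c\dot x\ge K$ by the integrating factor $e^{ct}$ and integrate, obtaining $\dot x(t)\ge K(1-e^{-ct})/c$ when $c>0$ and $\dot x(t)\ge Kt$ when $c=0$. Either way $\dot x$ is bounded below by a positive quantity for large $t$, so a further integration would force $x(t)\to\infty$ if $t_p$ were infinite, contradicting $x\in[0,1)$. Therefore $t_p<\infty$, and monotonicity together with $x<1$ on $[0,t_p)$ forces $\lim_{t\to t_p^-}x(t)=1$. The main obstacle is bookkeeping rather than technique: one must make the threshold $\lambda_\ast=\sqrt{M_\alpha}/(1-|\delta|)$ explicit in terms of $\alpha$ and $\delta$, and verify that the solution remains on $\{x<1\}$ up to $t_p$, which is automatic from the monotonicity step and standard local ODE existence since the right-hand side of \eqref{eqn:less} is smooth wherever $x<1$.
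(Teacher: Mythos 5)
Your proposal is correct and follows essentially the same route as the paper: bound the Coulomb term below by $\lambda^2(1-\delta)^2$ and the restoring force above by a constant on $[0,1]$, multiply by the integrating factor $e^{ct}$, and integrate twice (treating $c=0$ separately) to force $x$ to reach $1$ in finite time once $\lambda$ exceeds an explicit threshold. The only differences are cosmetic: you use the sharper constant $M_\alpha=\max_{s\in[0,1]}|s(1-\alpha s)|$ where the paper uses $1+\alpha$, and your separate first-crossing argument for $\dot x>0$ is redundant since the integrated lower bound on $\dot x(t)$ already yields strict positivity for $t>0$.
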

\begin{proof}
	Let us consider first the case $c>0$. Multiplying both sides of Eq.~\eqref{eqn:less} by $e^{ct}$ and integrating over the time interval $[0,t]$ yields
	\begin{equation*}
	e^{ct}\dot{x}(t) = \int\limits_0^t\left(\frac{\lambda^2(1+\delta\cos{(\omega s)})^2}{\bigl(1-x(s)\bigr)^2} -\bigl(x(s)-\alpha |x(s)|x(s)\bigr)\right)e^{cs}\,ds
	\end{equation*}
	and consequently
	\begin{equation*}
	\dot{x}(t) \ge \left(\lambda^2(1-\delta)^2-(1+\alpha)\right)\frac{1-e^{-ct}}{c}
	\end{equation*}
	due to the assumption $x\in [0,1]$. Integrating once more yields
	\begin{equation*}
	x(t)\ge \frac{1}{c}\left(\lambda^2(1-\delta)^2 - (1+\alpha) \right)\left(t-\frac{1-e^{-ct}}{c}\right) 
	\end{equation*}
	from which we conclude that there exists a finite time $t_p>0$ such that $x(t_p)=\lim\limits_{t\to t_p^-} x(t) = 1$ provided that $\lambda>\frac{\sqrt{1+\alpha}}{1-\delta}$.
	
	In the case of $c=0$, integrating both sides of Eq. \eqref{eqn:less} with respect to $t$ yields $\dot{x}(t)\ge \bigl(\lambda^2(1-\delta)^2 -(1+\alpha)\bigr)t$. Therefore, $x(t)\ge \bigl(\lambda^2(1-\delta)^2 -(1+\alpha)\bigr)\frac{t^2}{2}+x(0)$. This clearly proves the assertion as in the previous case. 
\end{proof}

\section{A Poincar{\'e} map study: Periodic solutions with higher period}\label{Poincare}

\noindent The results obtained so far have been entirely analytical and concern conditions for the existence of $T-$ periodic solutions of our non-autonomous oscillators as well as rigorous estimates of their amplitude. However, they give us no information about: (a) the {\it local} stability of these solutions under small perturbations or, in case they are stable, (b) the {\it size} of the regions of bounded motion around them in the $\left(x(t),\dot{x}(t)\right)$ phase space. To find out, we turn in this Section to the computation of Poincar{\'e} maps of the solutions intersecting the plane $\left(x(t_k),\dot{x}(t_k)\right)$ at $t_k=kT$, $k=1,2,3,\ldots$ time intervals. 

As is well--known \cite{strogatz2018nonlinear,wiggins2003introduction}, every initial condition $\left(x(0),\dot{x}(0)\right)$ of our 3--dimensional flow yields a unique orbit on the Poincar{\'e} map. Thus, computing solutions for a large selection of initial conditions, yields an excellent view of the {\it global} properties of the flow and offers valuable information about the system's {\it global} dynamics. Let us illustrate this by applying the Poincar{\'e} map method to analyze the behavior exhibited by a graphene--MEMS model described by the equation:
\begin{equation}\label{model}
\ddot{x} + c \dot{x} + x - \alpha x \lvert x \rvert = \dfrac{V^2(t)}{(1-x)^2},
\end{equation}
with $V^2(t) = \left(V_d + f\, \cos{(\omega t)}\right)^2$. Note that eq. \eqref{model} has dimensionless form, $c$ is the damping parameter, $\alpha$ is the nonlinearity parameter, and $f$ and $\omega$ denote the forcing amplitude and frequency, respectively. 

Below, we reveal the rich organization of periodic solutions of eq. \eqref{model} in phase space, emphasizing the presence of large islands about them in the conservative case $c=0$, noting how these islands can turn to {\it basins of attraction}, when $c>0$ but small. Although the flow is dominated by the central periodic orbit (of period $T=\frac{2\pi}{\omega}$) and its island of bounded motion, we discover that varying the forcing frequency leads to the appearance of stable periodic orbits of higher periods, with sizable islands around them, further away from the period $T-$ solution. The important consequence of this is that these islands extend the distance away from the $T-$ periodic orbit, within which solutions remain bounded avoiding the pull--in instability.

In what follows, we fix the parameters $\left(\alpha,V_d,f\right) = \left(0.5, 0.01, 0.3\right)$ and restrict our attention to the forcing frequency and the damping coefficient. To examine more clearly the effect of the forcing frequency, we depict in Fig. \ref{fig2} the Poincar\'{e} map (or Poincar\'{e}  Surface of Section (PSS) of the flow \cite{strogatz2018nonlinear}) for different values of $\omega \in \left\{1.2, 1.25, 1.3, 1.35\right\}$, with $c=0$. Evidently, while the central orbit is crucial for the system's stable dynamics, there exist many other periodic orbits with period $T^{\star}=nT$ around it, which are surrounded by islands that vary in size, see e.g. orbits of periods $4T$ or $7T$ in Fig. \ref{fig2} . Observe also that the $3T-$ periodic points, which have the largest islands around them,  move towards the origin as $\omega$ increases, and disappear approximately at $\omega = 1.38$. 

\begin{figure}[t]
	\centering
	\includegraphics[keepaspectratio, width=0.5\textwidth]{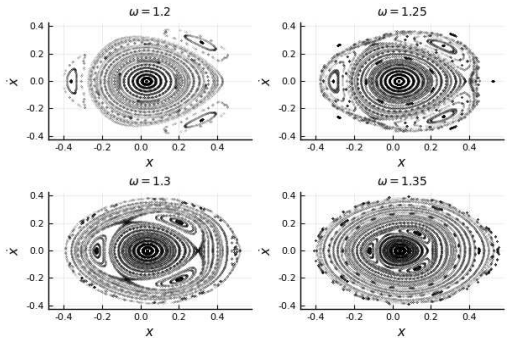}
	\caption{Poincar\'{e} Surface of Section for different values of $\omega \in \left\{1.2, 1.25, 1.3, 1.35\right\}$, with $c=0$.
		\label{fig2}}
\end{figure}

Of course, these $nT-$ periodic orbits are characterized by different amplitudes, as can be seen in Fig. \ref{fig3} for $\omega=1.3$. In particular, we record here the amplitudes of the solutions associated with initial conditions placed on a $500 \times 500$ grid of points over the interval $\left(-0.4, 0.55\right) \times \left(-0.4, 0.4\right)$. The non-colored points lead to unstable orbits that eventually run away to infinity. The  orbits corresponding to the colored points in Fig. \ref{fig3} are either periodic or quasi--periodic and hence exhibit bounded oscillations around the origin for all time.

\begin{figure}[ht]
	\centering
	\includegraphics[keepaspectratio, width=0.45\textwidth]{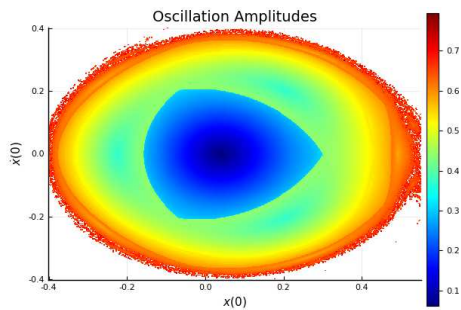}
	\caption{Amplitudes of the solutions for $\omega=1.3$ associated with an equally spaced $500 \times 500$ grid of initial conditions over the interval $\left(-0.4, 0.55\right) \times \left(-0.4, 0.4\right)$. \label{fig3}}
\end{figure}

It is interesting to examine, finally, the influence of the damping parameter on the properties of some of the higher--period orbits, when we set $\omega = 1.21$ (compare with Fig. \ref{fig4} for the undamped case) and restrict our attention to orbits with periods $3T, 4T, 7T$ and $18T$. Note that it is easy to identify the initial conditions leading to these orbits, using the PSS plot of the $c=0$ case shown in Fig. \ref{fig4}. 

\begin{figure}[h]
	\centering
	\includegraphics[keepaspectratio, width=0.5\textwidth]{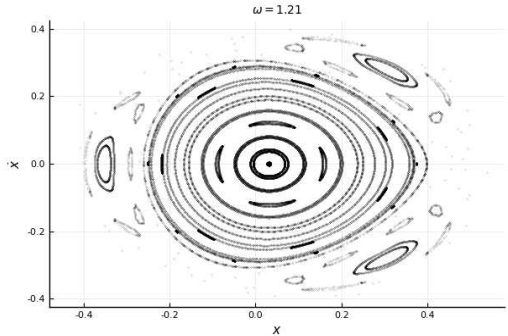}
	\caption{PSS for $c=0$ and $\omega=1.21$.  The orbits with period $3T, 4T, 7T$ and $18T$ can be easily identified in the plot by counting the corresponding chains of islands. \label{fig4}}
\end{figure}

Observe that the periodic orbits on these PSS come in pairs of one stable (with islands around it) and one unstable located at saddle points in between, with thin regions of chaotic behavior around them. When the damping exceeds a certain threshold, these stable--unstable pairs annihilate each other, depending on their rotation number and $\omega$. This happens via saddle--node bifurcations \cite{strogatz2018nonlinear,wiggins2003introduction}, through which the stable and unstable orbits coalesce and disappear. Thus, by slowly increasing $c$ one can identify a critical value of damping $c^{\star}$ at which these periodic orbits vanish. For example, such critical values $c^{\star}$, corresponding to $\omega=1.21$ and different periods $T^{\star} \in \left\{3T, 4T, 7T, 10T\right\}$, are $1.15 \times 10^{-2}$, $7.75\times 10^{-5}$, $1.5\times 10^{-5}$ and $6.8\times 10^{-6}$, respectively. 

Note first that the period of these orbits is inversely proportional to the value of $c^{\star}$ at which they disappear. Furthermore, the corresponding $c^{\star}$ values are also related to the forcing frequency. Indicatively, if we set $\omega=1.3$, the $3T$--periodic orbit ceases to exist for $c\approx 6.3\times10^{-3}$, which is substantially lower than the critical damping level associated with the $3T$--periodic orbit for $\omega=1.21$. 

Still, before these orbits disappear, it is interesting to examine their basins of attraction. Consider for example, the central period $T$ point at the origin and the $3T$--periodic orbit in Fig. \ref{fig7} with $c=10^{-3}$. Starting with a $500\times 500$ grid of initial conditions  over $\left(-0.41,0.55\right) \times \left(-0.41,0.41\right)$ and marking with blue and red colors respectively the initial conditions leading to the $T$--period and $3T$--period orbits, reveals a remarkable picture of intertwined basins spiraling outward from the origin and leading to a different attractor corresponding to the different color in each case.

\begin{figure}[h]
	\centering
	\includegraphics[keepaspectratio, width=0.45\textwidth]{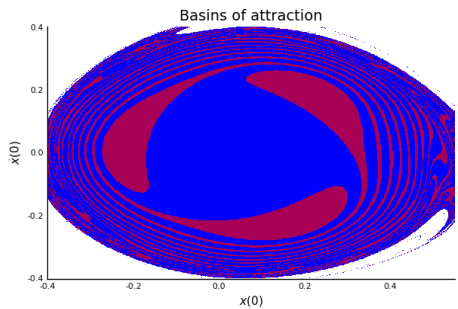}
	
	\caption{Basins of attraction for the central $T$--period (blue) and and the $3T$--period (red) orbits, for $\left(c,\omega\right) = \left(10^{-3},1.3\right)$. The white regions correspond to unstable behavior.
		\label{fig7}}
\end{figure}

\section{Conclusions}\label{conclusions}

In the present paper we undertook a thorough analytical and numerical study of the dynamical properties of a micro-electro-mechanical system (MEMS) commonly used as an electrostatic actuator. It consists of a 1-dimensional mass-spring model with two parallel capacitor plates, one of which is attached to a spring, while the other is fixed. The system is non-autonomous, with a $T-$ periodic voltage applied to the capacitor of the form $V(t)=V_{dc}+V_{ac} \cos(\omega t)$, with $\omega=2\pi/T$. Its equation of motion, 
\begin{equation*}
m \ddot{x}(t)+c \dot{x}(t)+h(x)=\frac{\varepsilon_0A V^2(t)}{2 \bigl(d-x(t)\bigr)^2}\,
\end{equation*}
includes a stiffness function $h(x)$ and a viscous dissipation force linearly proportional to the speed of the moving plate. Most importantly, it contains a singularity at $x(t)=d$, known as the pull--in instability, which causes the solutions to diverge, if $x(t)$ reaches a value sufficiently close to $d$. Therefore, our main purpose in this paper was to study periodic solutions of the above equation with large regions of initial conditions around them, for which the motion remains confined as $t\rightarrow \infty$ avoiding the pull-in instability.  

Our first step was to obtain analytical results concerning the existence of periodic solutions with the same period $T$ as the forcing term. They are known to be generally stable with large domains of initial conditions around them where the motion can be periodic, quasiperiodic or even chaotic. Indeed, we were able to estimate pull-in voltages for this model for a wide class of nonlinear stiffness functions $h(x)$. Next, we considered the model of a MEMS with  one of its capacitor plates attached to a spring made of graphene, where $h(x)\propto x|x|$ and obtained rigorous results concerning its $T-$ periodic solutions in connection with the initial conditions and parameter values of the problem.

Our analytical results, however, cannot ascertain the stability of these fundamental solutions, either locally or globally. Local stability, of course, can always be studied by linearization and application of Floquet theory, which involves the expansion of the periodic solution in Fourier series and the study of the eigenvalues of an (infinite) Hill's matrix. However, since we are ultimately interested in locating phase space regions where the motion is bounded  and the pull--in instability is avoided, we decided to integrate numerically the differential equations for the graphene system and reveal directly local and global stability properties of the system plotting the orbits on a ($T-$ periodic) Poincar{\'e} map of the flow. 

Thus, we were able to obtain valuable information about the system's dynamics, for a wide range of parameter values: First, in the absence of damping, we graphically depicted large regions of bounded oscillations around the $T-$ periodic solution located at the center of the 2--dimensional Poincar{\'e} surface of section. Secondly, we demonstrated the existence of stable (longer) periodic orbits of period $nT, n>1$, whose islands of bounded motion further extend the domain of stable oscillations of the model. Finally, in the presence of small dissipation, we displayed large basins of attraction around these fundamental solutions and obtained maximal sets of initial conditions for which the pull--instability is entirely avoided.

\subsection*{Acknowledgments}

SK acknowledges the support from a grant from the Ministry of Education and Science of the Republic
of Kazakhstan within the framework of the Project AP08051987. KK acknowledges useful discussions with Professor Christos Spitas and partial support for this work by funds from the Ministry of Education and Science of Kazakhstan, in the context of the Nazarbayev University internal grant HYST (2018-2021). AK was supported in part by Nazarbayev University FDCR Grants N 090118FD5353


\begin{thebibliography}{99}
	\bibitem{PS13} Purtova T and Schumacher H. \textit{Overview of RF MEMS technology and applications}, Handbook of Mems for Wireless and Mobile Applications,  Elsevier, 2013, 3--29. https://doi.org/10.1533/9780857098610.1.3.
	
	\bibitem{NNWD67} Nathanson  H. C, Newell W. E, Wickstrom R. A, and Davis,  J. R. The resonant gate transistor. \emph{IEEE Transactions on Electron Devices}, 1967; 14:117--133. https://doi.org/10.1109/t-ed.1967.15912.
	
	\bibitem{AP07} Ai S and Pelesko J. A. Dynamics of a canonical electrostatic MEMS/NEMS \emph{J. Dyn. Diff. Eqs}, 2007; 20:609--641. https://doi.org/10.1007/s10884-007-9094-x. 
	
	\bibitem{Flo17} Flores G. On the dynamic pull-in instability in a mass-spring model of electrostatically actuated MEMS devices. \emph{Journal of Differential Equations}, 2017; 262:3597--3609. https://doi.org/10.1016/j.jde.2016.11.037.
	
	\bibitem{GLXMZJZ18} Gong Q, Liu C, Xu Y, Ma C, Zhou J, Jiang R, and Zhou C.  Nonlinear vibration control with nanocapacitive sensor for electrostatically actuated nanobeam. \emph{Journal of Low Frequency Noise, Vibration and Active Control}, 2018; 37:235-252. https://doi.org/10.1177/1461348417725953.
	
	\bibitem{IY10} Ibrahim M.I and Younis M. I. The dynamic response of electrostatically driven resonators under mechanical shock. \emph{J. Micromech. Microeng}, 2010; 20:025006. https://doi.org/10.1088/0960-1317/20/2/025006.
	
	\bibitem{SHEK20} Skrzypacz, P, He, J. H, Ellis, G,  Kuanyshbay, M. A simple approximation of periodic solutions to microelectromechanical system model of oscillating parallel plate capacitor.\emph{ Mathematical Methods in the Applied Sciences.}
	
	\bibitem{SKNW19} Skrzypacz P, Kadyrov S, Nurakhmetov D, and Wei D. Analysis of dynamic pull-in voltage of a graphene MEMS model. \emph{Nonlinear Analysis: Real World Applications}, 2019; 45: 581--589. https://doi.org/10.1016/j.nonrwa.2018.07.025.
	
	\bibitem{WKK17} Wei D, Kadyrov S, and Kazbek Z. Periodic solutions of a graphene based model in micro-electro-mechanical pull-in device.\emph{Applied and Computational Mechanics}, 2017;11: 81--90. https://doi.org/10.24132/acm.2017.322. 
	
	\bibitem{ZYPM14} Zhang W. M, Yan H, Peng Z. K, and Meng G.  Electrostatic pull-in instability in MEMS/NEMS: A review. \emph{Sensors and Actuators A: Physical}, 2014; 214:187--218. https://doi.org/10.1016/j.sna.2014.04.025.
	
	\bibitem{younis2019} Hajjaj A. Z, Jaber N, Ilyas S, Alfosail F. K, Younis M. I,  Linear and nonlinear dynamics of micro and nano-resonators: Review of
	recent advances, \emph{International Journal of Non-Linear Mechanics}, 119 (2020) 103328. https://doi.org/10.1016/j.ijnonlinmec.2019.103328.
	
	\bibitem{GT13} Gutierrez A and Torres P.J. Non-autonomous saddle-node bifurcation in a canonical electrostatic MEMS. \emph{ Int. J. Bifurc. Chaos Appl. Sci. Eng.} 2013; 23:1350088 (9p.) https://doi.org/10.1142/S0218127413500880. 
	
	\bibitem{shang2017} Shang H, Pull-in instability of a typical electrostatic MEMS resonator and its control by delayed feedback, \emph{Nonlinear Dynamics} (2017) 90:171–183. https://doi.org/10.1007/s11071-017-3653-4.   
	
	\bibitem{AH07} Awrejcewicz, J and  Holicke M.M. {\em Smooth and nonsmooth high dimensional chaos and the Melnikov-type methods}. Vol. 60. World Scientific, 2007.
	
	\bibitem{DH06} De Coster C and Habets  P. \emph{Two-point boundary value problems: lower and upper solutions.} Mathematics in Science and Engineering, 2006. Vol. 205. Elsevier.
	
	\bibitem{HJY12} Huang Y, Jiajie L, and Yongsheng C. An overview of the applications of graphene‐based materials in supercapacitors. \emph{Small}, 2012; 8.12:1805--1834. https://doi.org/10.1002/smll.201102635.
	
	\bibitem{LH09} Lu Q, Huang R. Nonlinear mechanics of single-atomic-layer graphene sheets. \emph{International Journal of Applied Mechanics.} 2009; 1(03):443--67. https://doi.org/10.1142/S1758825109000228.
	
	\bibitem{LWKH08} Lee C, Wei X, Kysar JW, Hone J. Measurement of the elastic properties and intrinsic strength of monolayer graphene. \emph{science}; 2008: 321(5887):385--8. https://org.doi/10.1126/science.1157996.
	
	
	\bibitem{YZZ2012} Yang Y, Zhang R, and Zhao L. Dynamics of electrostatic microelectromechanical systems actuators. \emph{Journal of Mathematical Physics}, 2012, 53: 022703. https://doi.org/10.1063/1.3684748. 
	
	\bibitem{strogatz2018nonlinear}
	Strogatz S H, {\em Nonlinear dynamics and chaos: with applications to physics,
		biology, chemistry, and engineering}, CRC Press, 2018.
	
	\bibitem{wiggins2003introduction}
	Wiggins S, {\em Introduction to applied nonlinear dynamical systems and chaos},
	volume~2, Springer Science \& Business Media, 2003.
	
	
\end{thebibliography}

\end{document}